\patchcmd\Gread@eps{\@inputcheck#1 }{\@inputcheck"#1"\relax}{}{}
\newtheorem{theorem}{Theorem}[section]
\newtheorem{problem}[theorem]{Problem}
\newcommand{\qed}{\hfill $\square$\medskip}
\begin{document}

\def\nt{\noindent}

\title{ Coalition  of cubic graphs of order at most $10$ }

\author{\small 	
Saeid Alikhani$^{1}$ 
\and
\small Hamid Reza Golmohammadi$^{2}$  
\and
\small Elena V. Konstantinova$^{2,3}$
}

%\date{\today}

\maketitle

\begin{center}

$^1$Department of Mathematical Sciences, Yazd University, 89195-741, Yazd, Iran\\

$^{2}$Novosibirsk State University, Pirogova str. 2, Novosibirsk, 630090, Russia\\ 

$^3$Sobolev Institute of Mathematics, Ak. Koptyug av. 4, Novosibirsk,
630090, Russia\\
\medskip
{\tt alikhani@yazd.ac.ir ~~ h.golmohammadi@g.nsu.ru~~~e\_konsta@math.nsc.ru}
\end{center}

%%%%%%%%%%%%%%ABSTRACT%%%%%%%%%%%%%%%%%%%%%%%%%%%%%%%%%%%%%%%%%%%%%%%%%%%%%%%%%%%%
\begin{abstract}
    The  coalition in a graph $G$  consists of two disjoint sets of vertices $V_{1}$ and $V_{2}$, neither of which is a dominating set but whose union $V_{1}\cup  V_{2}$, is a dominating set. A coalition partition in a
 graph $G$  is a vertex partition $\pi$ = $\{V_1, V_2,..., V_k \}$ such that every set $V_i \in \pi$ is not a dominating set but
 forms a coalition with another set $V_j\in \pi$ which is not a dominating set. 
 The coalition number $C(G)$
 equals the maximum  $k$ of a coalition partition  of $G$. 
  In this paper, we compute the coalition number of all cubic graphs of order at most $10$.
  % and we conclude that for any  cubic graphs of order $6,8$ and $10$, we have $C(G)\in\{6,7,8\}$. 
\end{abstract}

\noindent{\bf Keywords:}   coalition; cubic graphs; Petersen graph.

\medskip
\noindent{\bf AMS Subj.\ Class.:}  05C60.

%%%%%%%%%%%%%%%%%%%%%%%%%%%%%%%%%%%%%%%%%%%%%%%%%%%%%%%%%%%%%%%%%%%%%%%%%%%%%%%%%
%%%%%%%%%%%%%%%%%%%%%%%%%%%%%%%%%%%%%%%%%%%%%%%%%%%%%%%%%%%%%%%%%%%%%%%%%%%%%%%%%

\section{Introduction} 
Let $G = (V,E)$ be
a simple graph of order $n$. The open neighborhood (closed neighborhood) of a vertex
$v \in V$ is the set $N(v)$ = $\{u | uv \in E\}$, (the set
$N[v]$ = $N(v) \cup \{v\}$). The number of vertices in  $N(v)$
is the degree of $v$, denoted by $ deg(v)$. A vertex of
degree $n-1$ is called a full or universal vertex, while a vertex of degree $0$ is an
isolate one. As usual the minimum degree and the maximum degree of $G$ is denoted by $\delta(G)$ and $\Delta(G)$, respectively. A subset $V_{i} \subseteq V$ is called a singleton set if 
$|V_i| = 1$
and  a non-singleton
set, if $|V_i|\ge 2$.  A set $S\subseteq V$ is a dominating set of  $G$, if every vertex in $V \setminus S$
has at least one neighbor in $S$.
A set $S \subseteq V$ is a total dominating set of  $G$ with no
isolated vertex, if every vertex in
$V$  has at least one neighbour in $S$; in other words $N(S)=V$. The literature on this subject has been
surveyed and detailed in the two excellent so-called domination books by Haynes,
Hedetniemi, and Slater \cite{7,8}. A domatic partition is a partition
of the vertex set into dominating sets. Formally, the domatic number
$d(G)$ equals the maximum order $k$ of a vertex partition, called a domatic partition, $\pi$ = $\{V_1, V_2, . . . , V_k \}$
such that every set $V_{i}$ is a dominating set in $G$. The domatic number of a graph was introduced by Cockayne and
 Hedetniemi \cite{2}. For more details on the domatic number refer to e.g., \cite{11,12,13}.

 Coalitions and coalition partitions were introduced in \cite{4},
and are  now studied in graph theory (see for example \cite{5,6}). The concepts were defined  in terms of general graph properties but focused
on the property of being a dominating set. 
 A coalition $\pi$ in a graph $G$ consists of two disjoint sets of vertices
 $V_1$ and $V_2$, neither of which is a dominating set of $G$ but whose union $V_1 \cup V_2$ is
 a dominating set of $G$. We say that the sets $V_1$ and $V_2$ form a coalition and that
 they are coalition partners in $\pi$.

A coalition partition, henceforth called a c-partition, in a graph
$G$ is a vertex partition $\pi$ = $\{V_1, V_2, \ldots, V_k \}$ such that every set $V_i$ of $\pi$ is either
a singleton dominating set of $G$, or is not a dominating set of $G$ but forms a coalition with another non-dominating set $V_j\in \pi$. The coalition number $C(G)$
equals the maximum order $k$ of a $c$-partition of $G$. 
Associated with every coalition partition $\pi$ of a graph $G$ is
a graph called the coalition graph of $G$ with respect to $\pi$, denoted $CG(G,\pi)$,
the vertices of which correspond one-to-one with the sets $V_1, V_2,...,V_k$ of $\pi$
and two vertices are adjacent in $CG(G,\pi)$ if and only if their corresponding
sets in $\pi$ form a coalition (\cite{5}). In  \cite{5} the coalition graphs (focusing
on the coalition graphs of paths, cycles, and trees) have studied. 
Recently Bakhshesh, Henning and Pradhan in \cite{Davood} characterized all graphs $G$ of order $n$ with $\delta(G)\leq 1$ and $C(G)=n$, and also characterized all trees $T$ of order $n$ with $C(T)=n-1$. 

The class of cubic graphs
is especially interesting for mathematical applications, because for various important open problems in
graph theory, cubic graphs are the smallest or simplest possible potential counterexamples, and so this creates motivation to study coalition parameter for the cubic graphs of order at most $10$.

  Alikhani and Peng have studied the domination polynomials (which is the generating function for the number of dominating sets of a graph) of cubic graphs of order $10$ in \cite{1}. As a
  consequence, they have shown that the Petersen graph is determined uniquely by its domination polynomial. 
  %In the study of coalition, it is natural to study the coalition parameter for the cubic graphs. 

\medskip
In the next section, we compute the coalition number of cubic graphs of order $6$ and $8$. 
Also, we compute the coalition number of the cubic graphs of order $10$ in Section 3.

\section{Coalition number of cubic graphs of order $6$ and $8$ }

In this section, we  study the coalition number  of the cubic graphs of order $6$ and $8$. 
We need  the following theorems. 

\begin{theorem}{\rm \cite{4}} \label{order}
If $G$ is a graph of order $n$, then $1\leq C(G)\leq n$. 
\end{theorem}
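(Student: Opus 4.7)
The plan is to prove the two inequalities separately, with the upper bound being essentially immediate and the lower bound requiring exhibiting a c-partition in every case.

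For the upper bound $C(G) \leq n$, I would simply invoke the definition: a c-partition $\pi = \{V_1, V_2, \ldots, V_k\}$ is, in particular, a partition of $V(G)$ into $k$ pairwise disjoint non-empty blocks. Since the ground set has $|V(G)| = n$ elements, at most $n$ such blocks can be formed, so $k \leq n$. Taking the maximum over all c-partitions gives $C(G) \leq n$.

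For the lower bound $C(G) \geq 1$, I must exhibit at least one c-partition of $G$. When $n = 1$, the trivial partition $\pi = \{V\}$ works because the sole vertex of $G$ forms a singleton dominating set. When $n \geq 2$, I would argue by cases on the structure of $G$. If $G$ has an isolated vertex $v$, the two-block partition $\pi = \{\{v\}, V \setminus \{v\}\}$ is a c-partition: neither block dominates $G$ (since $\{v\}$ cannot reach any other vertex and $V \setminus \{v\}$ cannot reach $v$), but their union is $V$, which is dominating. If $G$ has no isolated vertex, I would pick a convenient vertex $v$ and modify the construction — for instance, isolating a universal vertex (if any) as its own singleton dominating block and splitting the remaining vertex set into two non-dominating pieces whose union dominates $G$.

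The main obstacle lies in handling graphs such as the star $K_{1,n-1}$, where the naive two-block partition fails because any block containing the universal vertex is automatically dominating (and not a singleton if it contains more than one vertex). The fix is to use a three-block partition of the form $\pi = \{\{c\}, A, B\}$, where $c$ is the universal vertex and $\{A, B\}$ is a split of the leaves into two non-dominating classes whose union equals the leaf set and is therefore dominating together with $\{c\}$. Verifying that an analogous construction always succeeds (or appealing to the existence argument already given in \cite{4}) completes the proof of the lower bound and hence the theorem.
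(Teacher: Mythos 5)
Your upper bound argument is fine: a $c$-partition is by definition a partition of $V(G)$ into nonempty classes, so it has at most $n$ classes. Note that the paper itself gives no proof of this theorem --- it is quoted from \cite{4} --- so the proposal has to stand on its own, and on the upper bound it does.

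The lower bound is where there is a genuine gap. The content of $C(G)\ge 1$ is precisely the claim that \emph{every} graph admits at least one $c$-partition, and for $n\ge 2$ this is not automatic: the one-class partition $\{V\}$ is never a $c$-partition (since $V$ is a dominating set but not a singleton one), so some nontrivial partition must actually be produced. Your cases cover $n=1$, graphs with an isolated vertex, and (via the star example) graphs with a universal vertex, but the generic case --- no isolated vertex and no universal vertex, which includes every cubic graph treated in this paper --- is dispatched with the unproved assertion that one can always split the vertex set into ``two non-dominating pieces whose union dominates $G$.'' That assertion is essentially the theorem itself, not a routine verification: a careless split can leave a class that is a non-singleton dominating set (which the definition forbids) or a non-dominating class with no coalition partner. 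The standard repair is to extract a minimal dominating set $D$; if $|D|\ge 2$, any bipartition of $D$ into two nonempty parts gives two non-dominating sets whose union dominates, but one must then argue that the leftover vertices can be absorbed so that every class remains non-dominating and retains a partner, and that step needs an actual argument (for instance, repeatedly peeling off disjoint minimal dominating sets until the remainder contains no dominating set, then merging that remainder into one half of a split). Your closing offer to ``appeal to the existence argument already given in \cite{4}'' concedes exactly this: as written, the proposal proves the upper bound and only asserts the lower one.
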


\begin{theorem}{\rm \cite{4}} \label{low}
	If $G$ is a graph with no full vertex and minimum
	degree $\delta(G)\ge 1$, then  $C(G)\ge\delta(G)+2$.
\end{theorem}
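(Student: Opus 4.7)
The plan is to exhibit an explicit coalition partition of size $\delta(G)+2$. Let $v$ be a vertex realizing the minimum degree $\delta=\delta(G)$, write $N(v)=\{u_1,\ldots,u_\delta\}$, and set $W=V\setminus N[v]$. Since $v$ is not a full vertex, $W\neq\emptyset$. I would then propose the partition
\[
\pi=\bigl\{\{v\},\{u_1\},\ldots,\{u_\delta\},W\bigr\}
\]
of size $\delta+2$ and verify it is a $c$-partition.

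First I would check that no block of $\pi$ is itself a dominating set. Each singleton $\{x\}$ with $x\in N[v]$ fails to dominate because the hypothesis ``no full vertex'' gives $\deg(x)<n-1$, and $W$ fails because $v\notin W$ while $N(v)\cap W=\emptyset$. So to be a valid $c$-partition, each block must instead form a coalition with some other non-dominating block.

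For the coalitions themselves, the pair $\{v\},W$ is easy: their union $\{v\}\cup W = V\setminus N(v)$ is a dominating set because $v$ dominates every $u_i$. The main step is to show that, for each $i$, the singleton $\{u_i\}$ and $W$ form a coalition, i.e.\ that $\{u_i\}\cup W$ dominates $G$. The vertices outside this union are exactly $v$ and the $u_j$ with $j\neq i$; the vertex $v$ is dominated by $u_i$, and a given $u_j$ is dominated as soon as it is adjacent to $u_i$ or to some vertex of $W$. The main obstacle is ruling out the remaining case. If $u_j$ has no neighbour in $\{u_i\}\cup W$, then all its neighbours lie in $N[v]\setminus\{u_i,u_j\}$, a set of cardinality $\delta-1$, which forces $\deg(u_j)\leq\delta-1<\delta$ and contradicts the minimality of $\deg(v)=\delta$. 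This degree-counting contradiction closes the last case.

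Combining the three checks, $\pi$ is a $c$-partition with $|\pi|=\delta+2$, which gives $C(G)\geq\delta+2$.
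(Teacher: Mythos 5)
Your proof is correct: the partition $\{\{v\},\{u_1\},\ldots,\{u_\delta\},W\}$ works, the degree-counting step ruling out an undominated $u_j$ is exactly the key point, and the edge cases (no full vertex forces $W\neq\emptyset$, and $W$ never dominates $v$) are handled properly. The paper only cites this theorem from the Haynes et al.\ reference without reproducing a proof, and your argument is essentially the standard one given there, so there is nothing to compare beyond noting the match.
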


\begin{theorem} {\rm \cite{6}}\label{up}
	For any graph $G$, $C(G)\leq\frac{(\Delta(G)+3)^2}{4}$.
\end{theorem}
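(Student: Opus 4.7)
Let $\pi = \{V_1,\ldots,V_k\}$ be a c-partition realizing $k = C(G)$, and write $\Delta = \Delta(G)$. First I would dispose of the degenerate case in which $G$ contains a full vertex: such a vertex has degree $n-1 \le \Delta$, forcing $n \le \Delta + 1$, so $k \le n \le \Delta + 1 \le \frac{(\Delta+3)^2}{4}$. From here on I would assume $G$ has no full vertex, so every part of $\pi$ is non-dominating and must admit at least one coalition partner.

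Next I would establish a degree bound on the coalition graph $CG(G,\pi)$: every part $V_i$ has at most $\Delta + 1$ coalition partners in $\pi$. To see this, pick any $u \in V(G) \setminus N[V_i]$, which exists since $V_i$ is non-dominating. Every coalition partner $V_j$ of $V_i$ satisfies $V_j \cap N[u] \neq \emptyset$, and the partners are pairwise disjoint subsets of $V(G)$, so they contribute distinct vertices of $N[u]$. Hence the number of partners is at most $|N[u]| \le \Delta + 1$.

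I would then fix $v \in V(G)$ with $\deg(v) = \Delta$ and split $\pi$ into $A = \{V_i \in \pi : V_i \cap N[v] \neq \emptyset\}$ and $B = \pi \setminus A$. Disjointness of the parts and $|N[v]| = \Delta + 1$ yield $|A| \le \Delta + 1$. Any coalition partner of a part $V_b \in B$ must dominate $v$ and therefore lies in $A$, so $A$ is a vertex cover of $CG(G,\pi)$. Double counting coalition edges between $A$ and $B$ together with the degree bound gives $|B| \le |A|(\Delta+1)$ and hence the preliminary estimate $k \le (\Delta + 1)(\Delta + 2)$.

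The main obstacle is sharpening this crude inequality to the stated $\frac{(\Delta+3)^2}{4}$. The plan is to refine the accounting by tracking the internal structure of $A$: writing $a = |A|$ and letting $e_A$ denote the number of coalition edges with both endpoints in $A$, a double count upgrades the estimate to $|B| \le a(\Delta+1) - 2e_A$. A complementary constraint must then force $e_A$ to grow with $a$ (since every part of $A$ must itself have a partner, and one expects many such partners to be forced back into $A$ when $a$ is close to $\Delta+1$), and optimizing $k = a + |B|$ over the resulting region yields $k \le \frac{(\Delta+3)^2}{4}$, with the extremum near $a = \lfloor (\Delta+3)/2 \rfloor$. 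Producing this complementary constraint, presumably by varying the choice of $v$ and combining the resulting vertex covers, is the essentially non-routine step, since the elementary counts above give only the weaker $(\Delta+1)(\Delta+2)$.
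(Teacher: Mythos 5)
The paper does not prove this statement at all---it is quoted from reference \cite{6} (Haynes, Hedetniemi, Hedetniemi, McRae and Mohan, \emph{Upper bounds on the coalition number})---so there is no in-paper argument to compare yours against; I can only assess your attempt on its own terms. The parts you actually prove are correct: the full-vertex reduction, the fact that a non-dominating class $V_i$ has at most $\Delta+1$ coalition partners (choose $u\notin N[V_i]$ and note the partners occupy distinct vertices of $N[u]$; this is exactly Theorem~\ref{atmost}), and the $A/B$ split around a maximum-degree vertex giving $|A|\le\Delta+1$, $|B|\le|A|(\Delta+1)$, hence $C(G)\le(\Delta+1)(\Delta+2)$. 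But that is where the proof stops. The statement to be proved is $C(G)\le\frac{(\Delta+3)^2}{4}$, and for $\Delta=3$ your bound gives $20$ while the theorem gives $9$; indeed $20$ is weaker than the trivial bound $C(G)\le n=10$ that the paper needs to beat, so the missing sharpening is precisely the content of the theorem, not a routine tightening.

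Moreover, the plan you sketch for closing the gap does not work as stated. You correctly derive $|B|\le a(\Delta+1)-2e_A$, but to reach $\frac{(\Delta+3)^2}{4}$ from $k=a+|B|$ you would need $e_A\ge\frac{1}{2}\bigl(a(\Delta+2)-\frac{(\Delta+3)^2}{4}\bigr)$; at $a=\Delta+1$ and $\Delta=3$ this demands $e_A\ge 6=\binom{4}{2}$, i.e.\ that \emph{every} pair of classes in $A$ forms a coalition. Nothing forces any coalitions inside $A$: each class of $A$ is free to have all of its partners in $B$, so $e_A$ can be $0$ even when $a=\Delta+1$ (your own $2K_2$-type reasoning already exhibits classes of $A$ with several partners in $B$ and none in $A$). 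The ``complementary constraint'' you postulate is therefore not just unproved but false in general, and the actual argument in \cite{6} must exploit a different refinement (roughly, that the classes of $A$ share the $\Delta+1$ vertices of $N[v]$ among themselves, which trades off $|A|$ against the number of $B$-partners each class of $A$ can support, producing a product of the form $x(\Delta+3-x)$ whose maximum is $\frac{(\Delta+3)^2}{4}$). As it stands, your writeup proves a strictly weaker inequality and explicitly defers the essential step, so it is not a proof of the stated theorem.
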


\begin{theorem} {\rm \cite{4}} \label{atmost} 
	Let $G$ be a graph with maximum degree $\Delta(G)$,
	and let $\pi$ be a $C(G)$-partition. If $X \in \pi$, then $X$ is in at most $\Delta(G)+1$ coalitions.
\end{theorem}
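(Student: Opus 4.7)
The plan is to exhibit, for a fixed non-dominating $X\in\pi$, a small ``bottleneck'' set whose vertices must each contribute to every coalition partner of $X$, and then use disjointness of the partition classes to bound the number of partners.

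First I would dispose of the trivial case: if $X$ is a singleton dominating set of $G$, then by definition $X$ forms no coalitions in $\pi$, so the bound $0\le\Delta(G)+1$ is immediate. From now on assume $X$ is not a dominating set, so the set $U=V(G)\setminus N[X]$ of vertices undominated by $X$ is nonempty. Pick any $u\in U$; this vertex is the bottleneck.

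Next I would translate the coalition condition. If $V_j\in\pi$ is a coalition partner of $X$, then $X\cup V_j$ dominates $G$; in particular, $u$ must be dominated by $V_j$ (since $X$ fails to dominate $u$). Therefore $V_j\cap N[u]\neq\varnothing$. Now the crucial observation is that distinct partners of $X$ are disjoint subsets of $V(G)$, each meeting the fixed set $N[u]$; hence the number of such partners is at most $|N[u]|=\deg(u)+1\le \Delta(G)+1$, which is what we want.

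I do not expect a serious obstacle here: the argument is essentially a pigeonhole on $N[u]$ once one realizes that a single undominated vertex forces every coalition partner to hit its closed neighborhood. The only point that requires a little care is the opening case split, namely that a class $X\in\pi$ may legitimately appear as a singleton dominating set with no partners at all, so the statement should be read as ``at most $\Delta(G)+1$'' with the trivial upper bound $0$ in that degenerate case. Everything else is formal.
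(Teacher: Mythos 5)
Your argument is correct and is essentially the standard proof of this result: the paper itself cites Theorem \ref{atmost} from the literature without proof, and the cited source argues exactly as you do, by fixing a vertex $u$ undominated by $X$ and noting that the pairwise disjoint coalition partners of $X$ must each meet $N[u]$, giving at most $|N[u]|\le\Delta(G)+1$ partners. Your handling of the degenerate case where $X$ is a singleton dominating set (hence has no partners) is also the right reading of the definition.
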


First we  determine the coalition number of the cubic graphs of order $6$. There are exactly two  cubic graphs of order $6$ which are denoted by $G_{1}$ and $G_{2}$ in Figure \ref{Cubic6}.
\begin{figure}[H]
	\hglue2.5cm
	\includegraphics[width=9cm,height=3.3cm]{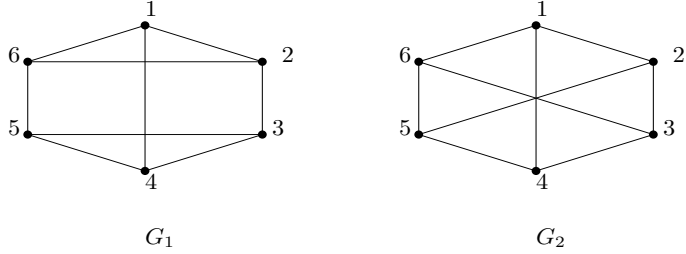}
	\vglue-10pt \caption{\label{Cubic6} Cubic graphs of order $6$.}
\end{figure}

\begin{theorem}\label{8}
	The coalition number of the cubic graphs $G_{1}$ and $G_{2}$  (Figure \ref{Cubic6}) of order $6$ is $6$.  
\end{theorem}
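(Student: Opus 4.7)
The plan is to establish the equality $C(G_i)=6$ by sandwiching. The upper bound $C(G_i)\leq 6$ is immediate from Theorem \ref{order}. For the lower bound, the natural candidate is the finest possible partition, namely the partition into singletons $\pi=\{\{v\}: v\in V(G_i)\}$, which has exactly $6$ parts. If I can show $\pi$ is a coalition partition, I am done.

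The verification reduces to two observations. First, since every vertex has degree $3$ in a graph on $n=6$ vertices, $|N[v]|=4<6$ for every $v$, so no singleton $\{v\}$ is itself a dominating set. Thus to confirm $\pi$ is a $c$-partition it suffices to produce, for each vertex $v$, another vertex $u$ with $N[v]\cup N[u]=V(G_i)$ (then $\{v\}$ and $\{u\}$ are a coalition). Both $G_1=K_{3,3}$ and $G_2$ (the triangular prism $K_3\square K_2$) are vertex-transitive, so it is enough to exhibit one such partner for a single representative vertex in each graph.

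For $G_1=K_{3,3}$ with parts $A=\{a_1,a_2,a_3\}$ and $B=\{b_1,b_2,b_3\}$, take $v=a_1$; the set $V\setminus N[a_1]=\{a_2,a_3\}$ lies entirely in $A$, and any $b_j\in B$ is adjacent to all of $A$, so $u=b_1$ works. For $G_2$ with vertex set $\{a_1,a_2,a_3\}\cup\{b_1,b_2,b_3\}$ forming two triangles joined by the matching $a_ib_i$, take $v=a_1$; then $V\setminus N[a_1]=\{b_2,b_3\}$, and the choice $u=b_2$ satisfies $\{b_2,b_3\}\subseteq N[b_2]$, so $\{a_1\}$ and $\{b_2\}$ form a coalition. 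By vertex-transitivity the same conclusion holds for every vertex in each graph.

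There is no real obstacle here; the only point requiring a moment of care is noting that vertex-transitivity lets us bypass checking all six vertices individually, and that the non-dominating hypothesis for singletons follows automatically from the degree count $\delta=\Delta=3$ in a graph on six vertices. Combining $C(G_i)\geq 6$ with Theorem \ref{order} yields $C(G_1)=C(G_2)=6$.
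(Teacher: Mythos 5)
Your proof is correct and produces exactly the same certificate as the paper, namely the partition of the vertex set into six singletons; the paper presents it as the refinement of a domatic partition into three dominating pairs $\{1,4\},\{2,5\},\{3,6\}$, while you verify coalition partners directly and use vertex-transitivity to handle both graphs uniformly. The approaches are essentially the same and your verification is sound.
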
 
\begin{proof} 
	Theorems \ref{order} and \ref{low}  show that $5\leq C(G)\leq 6$. Let compute $C(G)$ for the graph $G_1$. We create a partition of order $6$. Let $\pi= \big\{V_{1}=\{1,4\}, V_{2}= \{2,5\}, V_{3}= \{3,6\}\big\}$ be a domatic partition of a graph $G_{1}$, where $d(G_{1})=3$. Since any partition of a non-singleton, minimal dominating set into two nonempty sets creates two non-dominating sets whose union forms a coalition, so we can partition each of the minimal dominating sets $V_{1}=\{1,4\}$, $V_{2}= \{2,5\}$ and $V_{3}= \{3,6\}$ into two sets such as $V_{1,1}=\{1\}$, $V_{1,2}=\{4\}$, $V_{2,1}=\{2\}$, $V_{2,2}=\{5\}$, $V_{3,1}=\{3\}$ and $V_{3,2}=\{6\}$, which each of $V_{1,1}$, $V_{2,1}$ and $V_{3,1}$ form a coalition with each of $V_{1,2}$, $V_{2,2}$ and $V_{3,2}$, respectively. Therefore, we have a $c$-partition of $G_{1}$ of order $6$ as follows:
	
	$\pi_{1} =\big \{V_{1,1}=\{1\}, V_{1,2}=\{4\}, V_{2,1}=\{2\}, V_{2,2}=\{5\}, V_{3,1}=\{3\}, V_{3,2}=\{6\} \big\}$.	
	\qed
\end{proof}

\medskip

Now, we compute the  coalition number of cubic graphs of order $8$. There are exactly $6$ cubic graphs of order $8$ which is denoted by  $G_{1},G_{2},...,G_{6}$ in Figure \ref{Cubic8}. 

\begin{figure}[H]
	\hglue2.5cm
	\includegraphics[width=11cm,height=7.2cm]{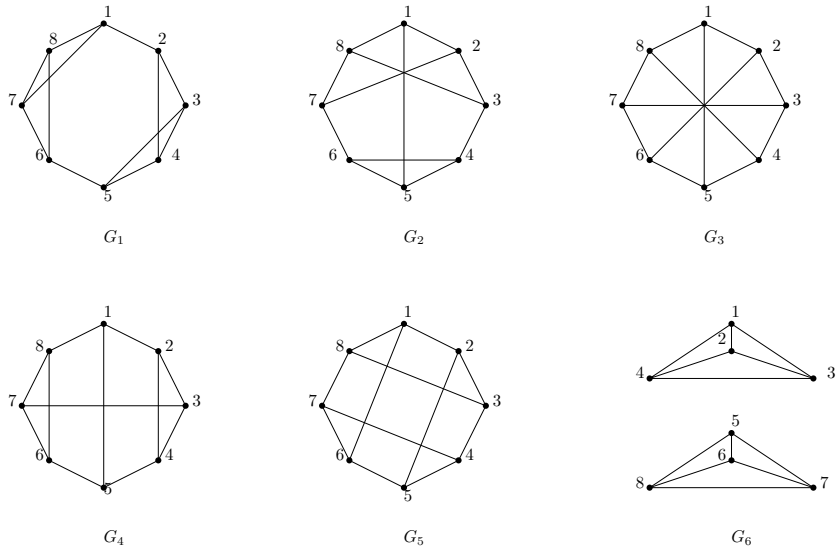}
	\vglue-10pt \caption{\label{Cubic8} Cubic graphs of order $8$.}
\end{figure}

The following theorem gives the coalition numbers of cubic graphs of order $8$: 

\begin{theorem}\label{8}
	For the cubic graphs $G_{1},G_{2},...,G_{6}$ of order $8$ (Figure \ref{Cubic8}) we have: 	
	\begin{enumerate} 
		\item[(i)] 
		$C(G_1)=C(G_5)=C(G_6)=8$. 
		\item[(ii)] 
		$C(G_4)=7.$  
		\item[(iii)]  
		$C(G_2)=C(G_3)=6$. 	 
	\end{enumerate} 
\end{theorem}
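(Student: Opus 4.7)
The plan is to first derive uniform bounds that apply to every cubic graph $G$ of order~$8$, then analyze $G_1,\ldots,G_6$ one by one. Since $\Delta(G)=3<7=n-1$, $G$ has no full vertex, so Theorem~\ref{low} gives $C(G)\ge\delta(G)+2=5$, while Theorem~\ref{order} gives $C(G)\le 8$. The key structural observation I would establish is this: a two-element set $\{u,v\}\subseteq V(G)$ dominates $G$ if and only if $N[u]$ and $N[v]$ partition $V(G)$, equivalently $u$ and $v$ are non-adjacent and share no common neighbor, i.e.\ $d(u,v)\ge 3$. This follows from $|N[u]|=|N[v]|=4$ and $|V(G)|=8$. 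Consequently, an all-singleton partition $\{\{v_1\},\ldots,\{v_8\}\}$ is a $c$-partition precisely when every vertex has at least one partner at distance at least~$3$.

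For part~(i), I would therefore exhibit, for each of $G_1,G_5,G_6$, an explicit all-singleton $c$-partition by listing enough pairs of vertices at distance $\ge 3$ so that every vertex appears in at least one such pair; each listed pair then forms a coalition in the partition, which gives $C(G_i)\ge 8$, matched by Theorem~\ref{order}.

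For part~(ii), to prove $C(G_4)\le 7$ I would identify a vertex $v$ in $G_4$ such that every other vertex lies at distance at most~$2$ from~$v$ (i.e.\ $v$ has eccentricity~$2$); then $\{v\}$ admits no singleton coalition partner, so no size-$8$ all-singleton $c$-partition exists, and since any $c$-partition of size $8$ must be all-singleton, we get $C(G_4)<8$. For the matching lower bound I would construct a size-$7$ $c$-partition grouping $v$ with a suitable neighbor into a two-element non-dominating block, keeping the other six vertices as singletons, and verify that each block has a coalition partner.

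Part~(iii) proceeds analogously: the same eccentricity argument rules out $C=8$ for $G_2$ and $G_3$, and explicit size-$6$ $c$-partitions establish the lower bound. The main obstacle is the extra exclusion $C(G_2),C(G_3)\le 6$, that is, ruling out any $c$-partition of size~$7$. Such a partition must consist of six singletons and one pair $\{a,b\}$; since $\{a,b\}$ is a non-singleton set of $\pi$, it must be non-dominating (so $d(a,b)\le 2$) and must coalesce with some other non-dominating set. I would then perform a finite case analysis: running through all candidate pairs $\{a,b\}$ with $d(a,b)\le 2$ in $G_2$ (and separately in $G_3$) and showing that in each case some singleton $\{v\}$ among the remaining six vertices has no coalition partner in $\pi$—neither another singleton $\{u\}$ with $d(u,v)\ge 3$ lying in $\pi$, nor the triple $\{v,a,b\}$ being a dominating set. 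The relatively small number of distance-$\ge 3$ pairs in $G_2$ and $G_3$ should keep this enumeration tractable, but executing it cleanly is the main technical step.
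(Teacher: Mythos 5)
Your proposal is correct in outline and its skeleton matches the paper's: explicit $c$-partitions for the lower bounds, structural exclusions for the upper bounds. Your reformulation of ``dominating pair'' as ``pair at distance $\ge 3$'' (equivalently $N[u]\cap N[v]=\emptyset$, since $|N[u]|+|N[v]|=8=n$) is exactly what the paper uses implicitly when it splits domatic partitions of $G_1,G_5,G_6$ into four dominating pairs, and your eccentricity-$2$ criterion for killing an all-singleton $8$-partition is the same as the paper's observation that $G_4$ has only two dominating pairs, so four of its vertices lie in none. The genuine divergence is in part (iii). The paper exploits that $\gamma(G_2)=\gamma(G_3)=3$: then \emph{no} two singletons ever form a coalition, so in a putative $7$-partition all six singletons would have to coalesce with the unique doubleton, contradicting Theorem~\ref{atmost} (at most $\Delta+1=4$ coalitions per set). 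Your plan instead enumerates all $\binom{8}{2}=28$ candidate doubletons and exhibits a partnerless singleton in each case; this works, but it is far more laborious and can be collapsed entirely by first noting $\gamma=3$ and then invoking Theorem~\ref{atmost}, or the counting fact behind it: a non-dominating $\{a,b\}$ misses some vertex $w$, and $\{a,b,v\}$ can dominate only if $v\in N[w]$, a set of size $4$. One caution on part (ii): ``group $v$ with a suitable neighbor'' is delicate. $G_4$ has four eccentricity-$2$ vertices; the paper's working $7$-partition puts \emph{two of them together} in the doubleton (namely $\{5,7\}$), leaving the two genuine dominating pairs $\{2,6\},\{4,8\}$ intact as singleton coalitions and letting the remaining two eccentricity-$2$ vertices coalesce with the doubleton. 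Pairing $v$ with an arbitrary neighbor risks stranding one of the other eccentricity-$2$ vertices, so the word ``suitable'' is doing real work and the verification step is not optional.
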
 
\begin{proof}
	\begin{enumerate}
		\item[(i)] 
		Using Theorems \ref{order} and \ref{low}, we have $5\leq C(G)\leq 8$. Without loss of generality, we compute $C(G)$ for the graph $G_1$ (with similar argument we can determine the coalition number for other graphs in this part).
		We show that there is a partition of order $8$ for the graph $G_{1}$.  Let $\pi= \big\{V_{1}=\{1,5\}, V_{2}= \{2,6\}, V_{3}= \{3,8\}, V_{4}= \{4,7\} \big\}$ be a domatic partition of the graph $G_{1}$, where $d(G_{1})=4$. Since any partition of a non-singleton, minimal dominating set into two nonempty sets creates two non-dominating sets whose union forms a coalition, so we can partition each of the minimal dominating sets in $\pi$ into two sets such as $V_{1,1}=\{1\}, V_{1,2}=\{5\}, V_{2,1}=\{2\}, V_{2,2}=\{6\}$, $V_{3,1}=\{3\}, V_{3,2}=\{8\}, V_{4,1}=\{4\}$ and $V_{4,2}=\{7\}$, which each of $V_{1,1}, V_{2,1}, V_{3,1}$ and $V_{4,1}$ form a coalition with each of  $V_{1,2}, V_{2,2}, V_{3,2}$ and $V_{4,2}$, respectively. Therefore, we have a maximal $c$-partition of $G_{1}$ of order $8$ as follows.
		
		$\pi_{1}= \big\{ V_{1,1}=\{1\}, V_{1,2}=\{5\}, V_{2,1}=\{2\}, V_{2,2}=\{6\}, V_{3,1}=\{3\}, V_{3,2}=\{8\}, V_{4,1}=\{4\}, V_{4,2}=\{7\} \big\}.$

		\item[(ii)]  We show that there is no partition of order $8$ for $G_{4}$. Let $\pi'$ = $\big\{V'_1, V'_2, \ldots, V'_{8} \big\}$ be a $c$-partition of $G_4$. Since $|V(G_4)|=8$, so the only possible partition of eight sets is eight singleton sets. Since the graph $G_{4}$ has exactly two dominating sets of size $2$, without loss of generality, we may assume that there are only four singleton sets $V'_2$, $V'_4$, $V'_6$ and $V'_8$, in which $V_2'$ forms a coalition with $V_6'$, and $V_4'$ is in a coalition with $V_8'$. It follows that none of two singleton sets of the remaining singleton sets $V'_1$, $V'_3$, $V'_5$ and $V'_7$ form a coalition of size $2$. Therefore, there is no partition of order $8$. So, $C(G_{4})\leq 7$. Now we  show that there is a partition of order $7$.  Let $\pi''= \big\{V''_{1}=\{1,3,5,7\}, V''_{2}= \{2,6\}, V''_{3}= \{4,8\} \big\}$ be a domatic partition of a graph $G_{4}$, where $d(G_{4})=3$. We can partition each of the minimal dominating sets $V''_{2}=\{2,6\}$ and $V''_{3}= \{4,8\}$ into two sets such as $V''_{2,1}=\{2\}$, 
		$V''_{2,2}=\{6\}, V''_{3,1}=\{4\}$ and $V''_{3,2}=\{8\}$, in which $V''_{2,1}=\{2\}$ form a coalition with $V''_{2,2}=\{6\}$, and $V''_{3,1}=\{4\}$ is in a coalition with $V''_{3,2}=\{8\}$. Now we create a partition $\pi''_{1}$ of sets and put the sets $V''_{2,1}=\{2\}$, $V''_{2,2}=\{6\},V''_{3,1}=\{4\}$ and $V''_{3,2}=\{8\}$ in this partition. To obtain other sets of partition $\pi''_{1}$, let $W=\{1,5,7\}\subset V''_{1}$ be a minimal dominating set contained in $V''_{1}$. So, we can partition it into two non-dominating sets $W_{1}=\{1\}$ and $W_{2}=\{5,7\}$, add these two sets to $\pi''_{1}$. The set $T=\{3\}$ remains which is not a dominating set, else there are at least 4 disjoint dominating sets in $G_4$, a contradiction, because  $d(G_{4})=3$. The set $T$ forms a coalition with $W_{2}$, so  we can add $T$ to $\pi''_{1}$. Therefore, we have a maximal $c$-partition of $G_4$ of order $7$ as follows:
		
		$\pi''_{1} = \big\{W_{1}=\{1\}, W_{2}=\{5,7\},T=\{3\}, V''_{2,1}=\{2\},V''_{2,2}=\{6\}, V''_{3,1}=\{4\}, V''_{3,2}=\{8\} \big\}.$ 
		
		\item[(iii)] 
		Without loss of generality, we  compute $C(G)$ for the graph $G_2$.  First, we  show there is no partition of order $7$ for $G_2$. We  suppose $\psi = \big\{U_1, U_2, \ldots, U_{7} \big\}$. Since $|V(G_2)|=8$, so the only possible partition of $V(G_2)$  is six singleton sets and one doubleton set. Since $\gamma(G_2)=3$, no two singleton sets form a coalition and by Theorem \ref{atmost}, the doubleton set can be in a coalition with at most four singleton sets. Therefore,  $C(G_2)\leq 6$. In order to show that there is a partition of order 6, we make a domatic partition. Assume that $\psi'= \big\{U'_{1}=\{1,2,4,7\}, U'_{2}= \{3,5,6,8\} \big\}$ be a domatic partition of the graph $G_2$, where $d(G_{2})=2$. Since any partition of a non-singleton, minimal dominating set into two nonempty sets creates two non-dominating sets whose union forms a coalition, so we may assume that each of the minimal dominating sets $U''_{1}=\{1,4,7\} \subset U'_{1}$ and $U''_{2}= \{3,5,8\} \subset U'_{2}$ into two sets such as $U''_{1,1}=\{1,4\}, U''_{1,2}=\{7\}$, $U''_{2,1}=\{3,8\}$ and $U''_{2,2}=\{5\}$, in which $U''_{1,1}=\{1,4\}$ form a coalition with $U''_{1,2}=\{7\}$, and $U''_{2,1}=\{3,8\}$ is in a coalition with $U''_{2,2}=\{5\}$. Now we create a partition $\psi''$ of sets and put the sets $U''_{1,1}=\{1,4\}$, $U''_{2,1}=\{7\}$, $U''_{2,1}=\{3,8\}$ and $U''_{2,2}=\{5\}$ in this partition. The sets $U'''_{1}=\{2\} \subset U'_{1}$ and $U'''_{1}=\{2\} \subset U'''_{2}=\{6\}\subset U'_{2}$ remain which are not dominating sets, else there are at least 4 disjoint dominating sets in $G_2$, a contradiction. The set $U'''_{1}=\{2\}$ forms a coalition with $U''_{1,1}$ and $U'''_{2}=\{6\}$ is in coalition with  $U''_{2,1}=\{3,8\}$, and so we can add $U'''_{1}$ and $U'''_{2}$ to $\psi''$. Therefore, we have a $c$-partition of $G_2$ of order $6$ as follows:
		
		$\psi'' = \big\{U''_{1,1}=\{1,4\}, U''_{1,2}=\{7\}, U''_{2,1}=\{3,8\}, U''_{2,2}=\{5\},  U'''_{1}=\{2\}, U'''_{2}=\{6\}\big\}$.
		
		Therefore, we have the result.\qed 
	
	\end{enumerate}
	
	\end{proof}

\section{Coalition number of cubic graphs of order $10$ }

In this section, we study the coalition number of cubic graphs of order $10$.  In particular, we obtain the coalition number of the Petersen graph. 

\begin{figure}[H]
	\hglue2.5cm
	\includegraphics[width=11cm,height=4.89cm]{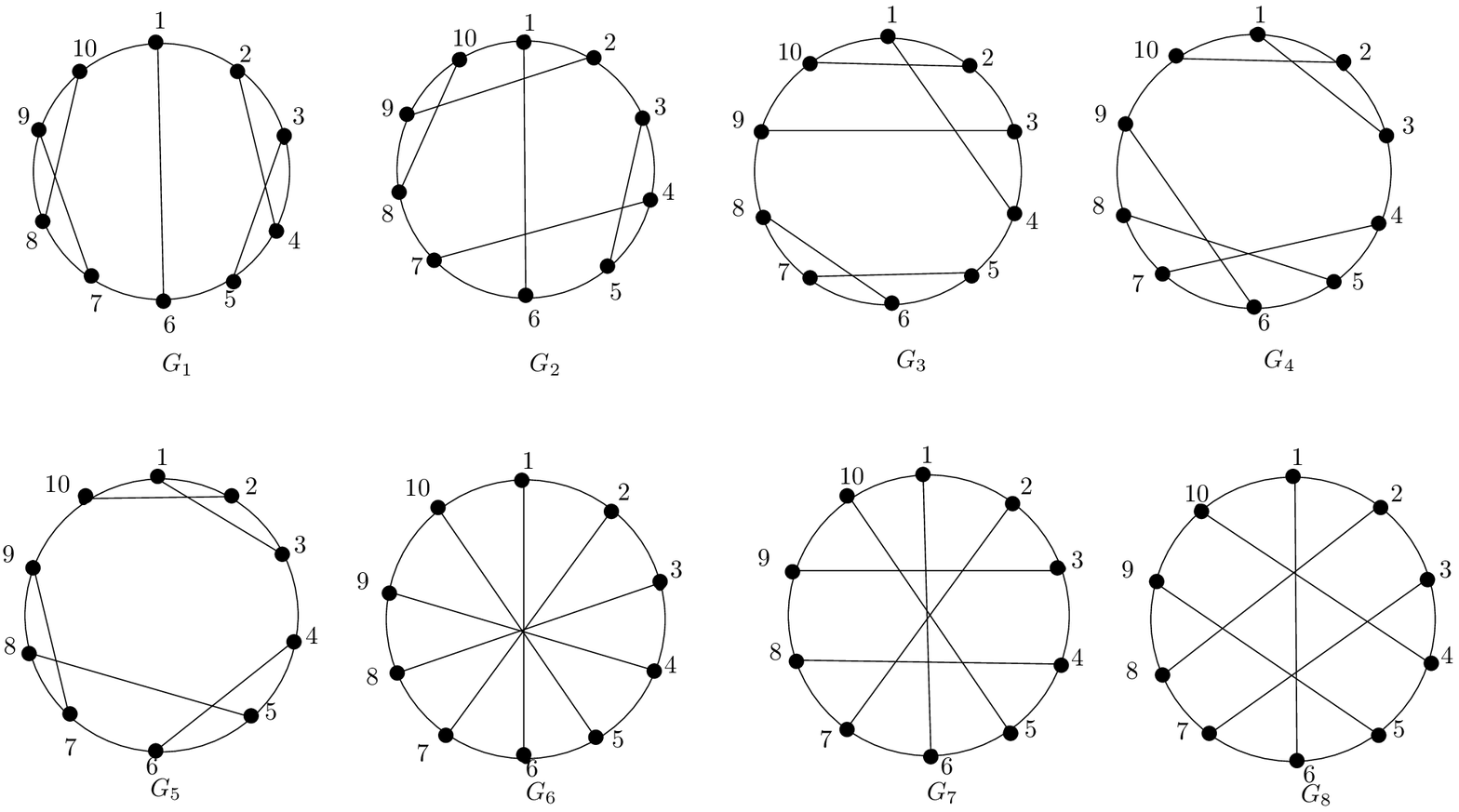}
	\vglue5pt
	\hglue2.5cm
	\includegraphics[width=11cm,height=4.99cm]{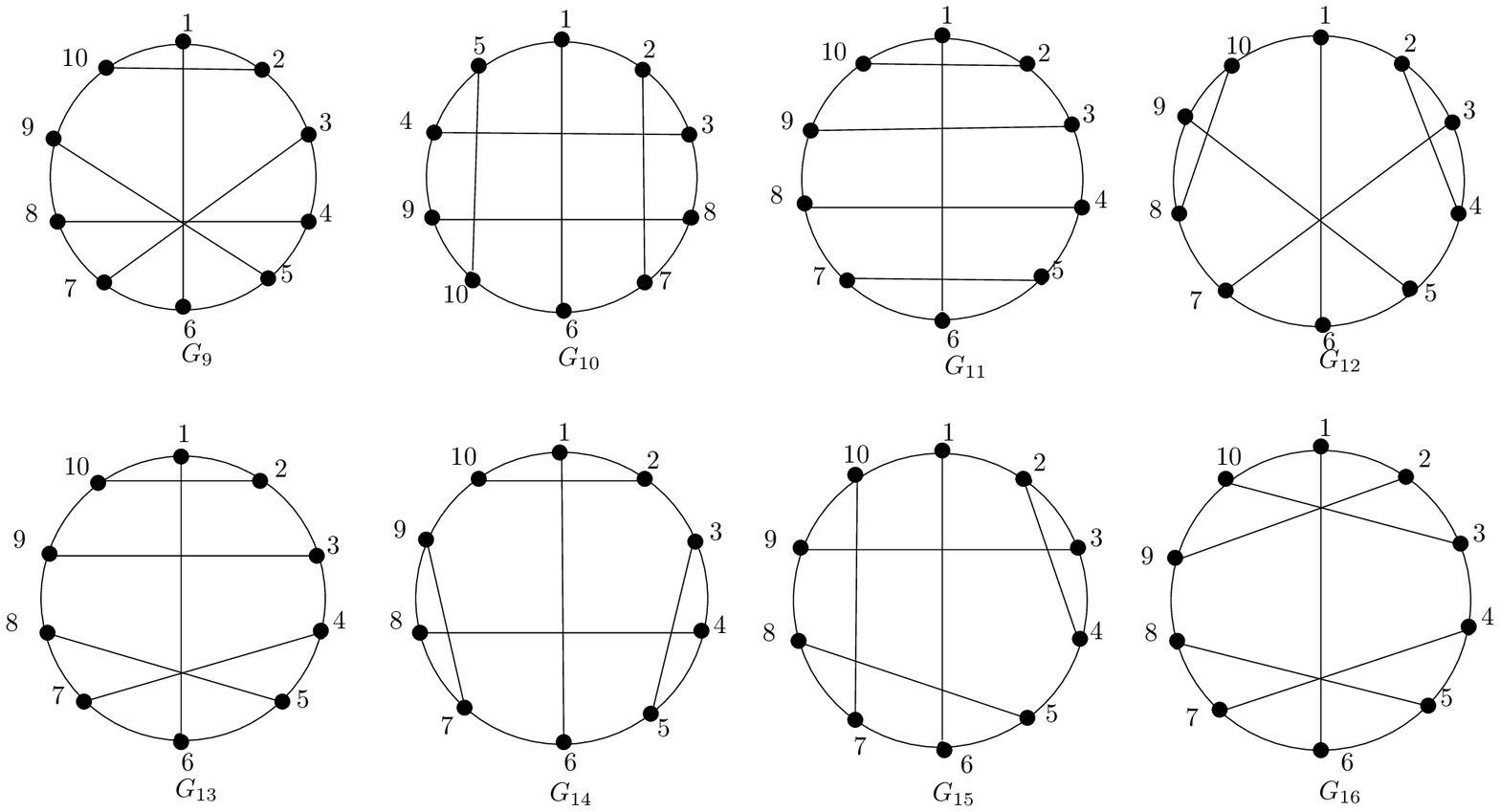}
	\hglue2.5cm
	\includegraphics[width=10.7cm,height=4.9cm]{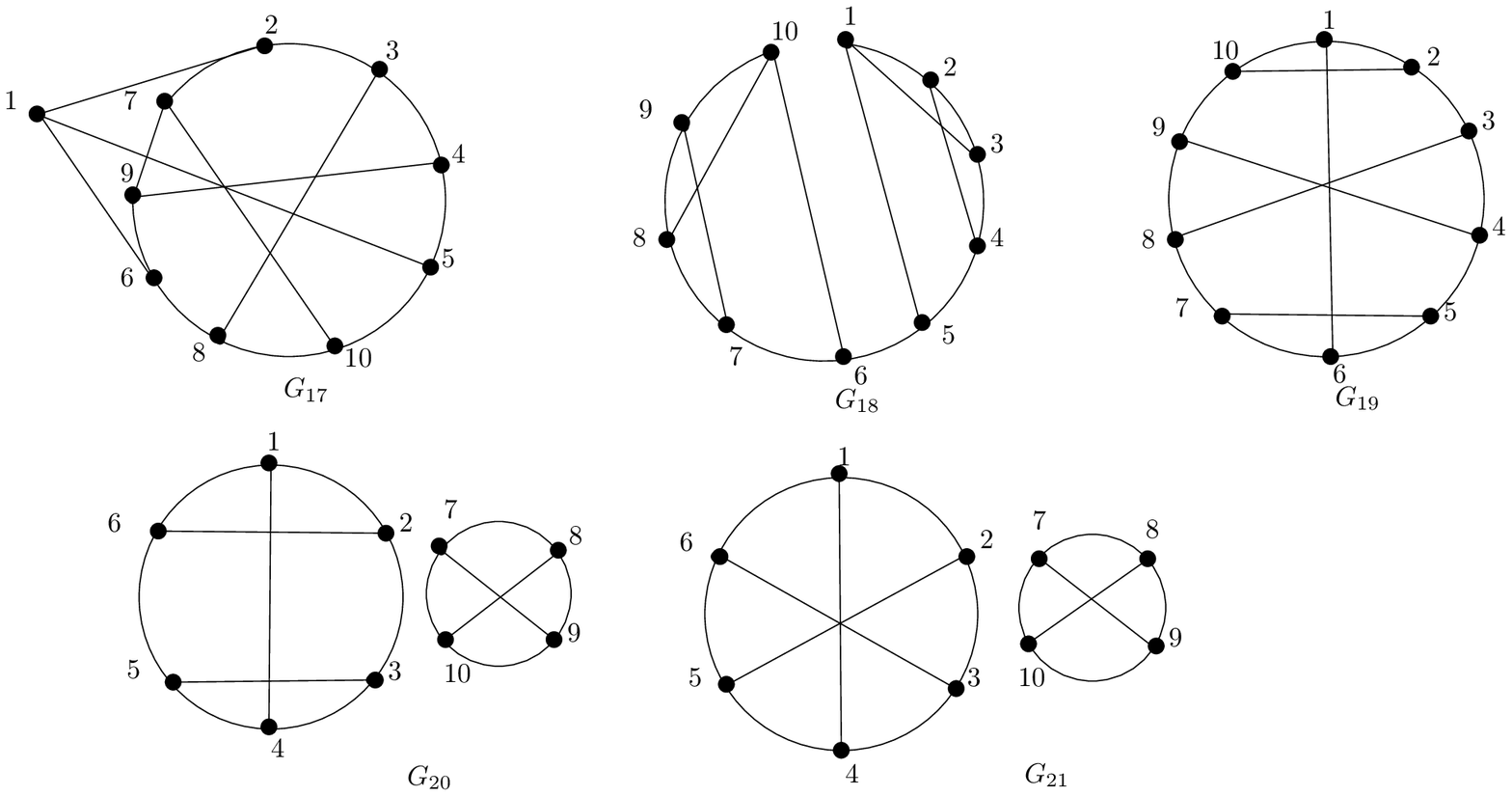}
	\hglue2.5cm
	\vglue-10pt \caption{\label{figure2} Cubic graphs of order $10$.}
\end{figure}

\medskip
There are exactly $21$ cubic graphs of order $10$ denoted by  $G_{1},G_{2},...,G_{21}$ in Figure \ref{figure2} (see \cite{10}). 
In particular, the graph $G_{17}$ is isomorphic to the Petersen graph $P$.
Now we state and prove the following theorem.

\begin{theorem}\label{8}
	For the cubic graphs $G_{1},G_{2},...,G_{21}$ of order $10$ (Figure \ref{figure2}) we have: 	
	\begin{enumerate} 
		\item[(i)] 
		$C(G_1)=8$. 
		\item[(ii)] 
		$C(G_i)=7$ for $i\in \{2,3,4,5,6,7,9,10,11,12,14,15,18,19,20,21\}$. 
		\item[(iii)]  
		$C(G_j)=6$ for $j\in \{8,13,16,17\}$.	 
	\end{enumerate} 
\end{theorem}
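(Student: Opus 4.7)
The plan is to establish each equality by proving matching lower and upper bounds. Applied with $\Delta(G_i)=\delta(G_i)=3$, Theorems \ref{order}, \ref{low}, and \ref{up} give the a priori bounds $5\le C(G_i)\le 9$ for every $i$. Two structural observations drive the refinement. First, every cubic graph of order $10$ satisfies $\gamma(G_i)\ge 3$, because $|N[u]\cup N[v]|\le |N[u]|+|N[v]|=8<10$ for any two vertices; consequently, no two singleton blocks of a $c$-partition can form a coalition, and every singleton block must have a non-singleton partner. Second, Theorem \ref{atmost} ensures that every block of a $C(G_i)$-partition participates in at most $\Delta(G_i)+1=4$ coalitions.

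For the lower bounds I would produce, for each $G_i$, an explicit $c$-partition of the claimed cardinality using the recipe of Section~2: read a domatic partition off the drawing, split each non-singleton dominating set into two non-empty non-dominating pieces (which automatically form a coalition), and, if necessary, rebalance by merging a singleton into an adjacent block while preserving a coalition partner for every block. This is case-by-case bookkeeping rather than a unified argument.

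For the upper bounds the two observations above yield $C(G_i)\le 8$ uniformly: a $9$-partition of $10$ vertices has block type $(2,1^8)$, and the eight singletons would all have to partner with the single pair, violating the bound of $4$ coalitions per block. To sharpen further I would use the additional constraint that a singleton $\{v\}$ can partner only with a block $S$ such that $S$ dominates the six vertices of $V(G_i)\setminus N[v]$. For part~(ii), this constraint combined with the block-type enumeration $(3,1^7)$ or $(2^2,1^6)$ of an $8$-partition rules out $C(G_i)=8$; the triple case falls immediately, while the $(2^2,1^6)$ case requires inspecting, for each relevant $G_i$, which pairs of vertices dominate the complement of a given $N[v]$. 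An analogous enumeration applied to $7$-partitions in part~(iii) (block types $(4,1^6)$, $(3,2,1^5)$, $(2^3,1^4)$) yields $C(G_j)\le 6$.

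The principal obstacle is the Petersen graph $G_{17}=P$: because $|N[a]\cup N[b]|$ is at most $7$ for any two vertices of $P$, and is exactly $6$ when $a,b$ are adjacent (since $P$ has girth $5$), a pair can serve as the partner of a singleton $\{v\}$ only for very few choices of $v$, and one must show that in every block-type $(3,2,1^5)$ or $(2^3,1^4)$ of a hypothetical $7$-partition some singleton is left with no legal partner. Here I would exploit the vertex- and arc-transitivity of $P$ to reduce the analysis to a handful of orbit representatives for the pair and triple blocks, and then verify infeasibility directly. The corresponding lower-bound $c$-partitions of size $6$, including one for $P$, are then straightforward to exhibit by the Section~2 method.
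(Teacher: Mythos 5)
Your proposal follows essentially the same route as the paper: the a priori bounds $5\le C(G)\le 9$, the observation that $\gamma\ge 3$ forbids singleton--singleton coalitions, the bound of $\Delta+1=4$ coalitions per block from Theorem \ref{atmost} to kill the $9$-partition, block-type enumeration ($(2^2,1^6)$, $(3,1^7)$ for ruling out $8$; $(2^3,1^4)$, $(3,2,1^5)$, $(4,1^6)$ for ruling out $7$), and explicit domatic-partition-based constructions for the lower bounds. Your suggestion to exploit the vertex- and arc-transitivity of the Petersen graph to organize the case check, and the girth-$5$ computation $|N[a]\cup N[b]|\le 7$, are a cleaner way to carry out the same verification the paper does by ad hoc case inspection, but the argument is not substantively different.
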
 
\begin{proof} 
	By Theorems \ref{low} and \ref{up}, for the cubic graphs of order $10$, $5\leq C(G)\leq 9$. 
	\begin{enumerate}
		\item[(i)] 	We show that $ C(G_{1})\ne 9 $. Suppose that there exists a $c$-partition $\pi$ = $\big\{V_1, \ldots, V_{9} \big\}$ for the graph $G_{1}$ with nine classes. So, every set $V_k$ of $\pi$ is either a singleton dominating set of $G_{1}$, or is not a dominating set of $G$ but forms a coalition with another non-dominating set $V_l\in \pi$.
	  Since the graph $G_{1}$ has order $10$, exactly one class of $\pi$ must contains two vertices. Without loss of generality, let $|V_1|=2$ and $|V_j|=1$ for $2\leq j \leq9$. Let $S$ be a dominating set in the graph $G_{1}$. Since $|S|\ge 3$, no two singleton sets form a coalition. Now every set $V_j$ for $2\leq j \leq9$ must be in a coalition with $V_1$. Moreover, by Theorem \ref{atmost}, $V_1$ is in a coalition with at most $\Delta(G)+1=4$ sets. That means $V_1$ is a coalition partner of at most 4 singleton sets. Therefore, remain at least 4 singleton sets, neither of which can form a coalition with $V_1$, and it is a contradiction. So, we cannot create a partition of order $9$. Therefore, $C(G_{1})\leq 8$.

		Now we show that there is a partition of order $8$ for the graph $G_{1}$. We first construct a domatic partition. Let $\pi'= \big\{V_{1}=\{1,5,7\}, V_{2}= \{2,3,4,6,8,9,10\}\big\}$ be a domatic partition of a graph $G_{1}$, where $d(G_{1})=2$. We can partition the minimal dominating set $V_{1}=\{1,5,7\}$ into two sets $V_{1,1}=\{7\}$ and $V_{1,2}=\{1,5\}$, which form a coalition. Now we create a partition $\pi''$ of sets and put the sets $V_{1,1}=\{7\}$ and $V_{1,2}=\{1,5\}$ in this partition. To obtain other sets of the partition $\pi''$, let $V'_{2}=\{2,6,10\}\subset V_{2}$ be a minimal dominating set contained in $V_{2}$. We can partition it into two non-dominating sets $V'_{2,1}=\{2\}$ and $V'_{2,2}=\{6,10\}$, add these two sets to $\pi''$. The set $W=\{3,4,8,9\}$ remains which is not a dominating set, else there are at least 3 disjoint dominating sets in $G_{1}$, a contradiction, since $d(G_{1})=2$. The set $W$ forms a coalition with other sets, then we can add $W$ to $\pi''$. Thereby creating a $c$-partition of $G_{1}$ of order at least $5$, however, we can create a $c$-partition with order $8$ because there are two sets $W_{1}=\{3\}\subset W$ and $W_{2}=\{4\}\subset W$ form a coalition with $V'_{2,2}=\{6,10\}$, and also two sets $W_{3}=\{8\}\subset W$ and $W_{4}=\{9\}\subset W$ form a coalition with $V'_{1,2}=\{1,5\}$. Therefore, we can remove the set $W$ and add the sets $W_{1}=\{3\}$, $W_{2}=\{4\}$, $W_{3}=\{8\}$ and $W_{4}=\{9\}$ to $\pi''$. So we have a $c$-partition of $G_{1}$ of order $8$ as follows:\\
		$\pi'' = \big\{V_{1,1}=\{7\},V_{1,2}=\{1,5\},V'_{2,1}=\{2\}, V'_{2,2}=\{6,10\},W_{1}=\{3\},W_{2}=\{4\}, W_{3}=\{8\}, W_{4}=\{9\}\big\}$. 
		
		%Then, it follows that $C(G_{1}) = 8$.     

		\item[(ii)] 
		Without loss of generality, we compute $C(G)$ for the graph $G_{2}$. Similar to Part (i) there is no $c$-partition of order $9$ for $G_{2}$. Now we show that there is no partition of order $8$ for $G_{2}$. Let $\pi_{1}$ = $\big\{V_1, V_2, \ldots, V_{8} \big\}$ be a $c$-partition of $G_2$. We consider two cases.   
		
		\nt {\bf Case 1}. There are exactly two sets of $\pi_{1}$ which consist of two vertices. Without loss of generality, let $|V_1|=|V_2|=2$ and $|V_j|=1$ for $3\leq j \leq 8$. Let $S$ be a dominating set in the graph $G_{2}$. Since $|S|\ge 3$, no two singleton sets form a coalition. Now every set $V_j$ for $3\leq j \leq 8$ must be in a coalition with at least one of $V_1$ or $V_2$. By Theorem \ref{atmost},
		each of $V_1$ and $V_2$ is in at most $\Delta(G)+1=4$ coalitions. Let $V_1$ and $V_2$ form a coalition, then each of $V_1$ and $V_2$ can be in at most three additional coalitions. Without loss of generality, suppose there are two collections such as $\pi_{1,1}$= $\big\{V_{1}, V_{3}, V_{4}, V_{5}\big\}\subset\pi_{1}$, which $V_{1}$ is in a coalition with each of singleton sets $V_{3}$, $V_{4}$ and $V_{5}$ and $\pi_{1,2}$= $\big\{V_{2}, V_{6}, V_{7}, V_{8}\big\}\subset\pi_{1}$, 
		which $V_{2}$ is in a coalition with each of singleton sets $V_{6}$, $V_{7}$ and $V_{8}$ such that $\pi_{1,1} \cup \pi_{1,2} = \pi_{1}$ and $\pi_{1,1} \cap \pi_{1,2} = \emptyset$. Note that  $V_1$ and $V_2$ form dominating sets of cardinality $3$ with singleton sets. All dominating sets of cardinality $3$ of  the graph $G_{2}$ are: \\
		$\{1, 3, 8\}, \{1, 4, 8\}, \{1, 4, 9\}, \{1, 4, 10\}, \{1, 5, 8\}, \{2, 5, 8\}, \{3, 6, 8\}, \{3, 6, 9\}, \{3, 6, 10\}, \\\{3, 7, 10\}, 
		\{4, 6, 9\}, \{5, 6, 9\}.$

		Now we show there are no two collections such as $\pi_{1,1}$ and $\pi_{1,2}$ such that $\pi_{1,1} \cup \pi_{1,2}=\pi_{1}$ and $\pi_{1,1} \cap \pi_{1,2} = \emptyset$. Since $V_1$ and $V_2$ are in a coalition, without loss of generality, we can suppose $V_1 = \{1,4\}$ and $V_2 =\{6,9\}$, and so $V_1$ is a coalition partner with each of $V_{3}=\{8\}$, $V_{4}=\{9\}$ and $V_{5}=\{10\}$, and also $V_2$ is in a coalition with each of $V_{6}=\{8\}$, $V_{7}=\{9\}$ and $V_{8}=\{10\}$. Therefore, $\pi_{1,1} \cup \pi_{1,2} \neq \pi_{1}$ and $\pi_{1,1} \cap \pi_{1,2} \neq \emptyset$, and it is a contradiction. So, there are no two collections such as $\pi_{1,1}$ and $\pi_{1,2}$ when $V_1$ and $V_2$ are coalition partners. Implying that, there is no a partition of order 8. Hence, $C(G_{2})\leq 7$. Similarly, an identical argument shows the same result holds when $V_1$ and $V_2$ form a coalition, and $V_1$ form a coalition with any three singleton sets of $\pi_{1}$ and $V_2$ form a coalition with each of the other three singleton sets of $\pi_{1}$.
		If $V_1$ and $V_2$ do not form a coalition, by Theorem \ref{atmost}, 
		each of $V_1$ and $V_2$ can be in at most four coalitions. Without loss of generality, let two collections such as $\pi_{1,1}$= $\big\{V_{1}, V_{3}, V_{4}\big\}\subset\pi_{1}$, which $V_{1}$ is in a coalition with each of singleton sets $V_{3}$, $V_{4}$ and $\pi_{1,2}$= $\big\{V_{2}, V_{5}, V_{6}, V_{7}, V_{8}\big\}\subset\pi_{1}$, 
		which $V_{2}$ is in a coalition with each of singleton sets $V_{5}$, $V_{6}$, $V_{7}$ and $V_{8}$ such that $\pi_{1,1} \cup \pi_{1,2} = \pi_{1}$ and $\pi_{1,1} \cap \pi_{1,2} = \emptyset$. Without loss of generality, assume that $V_1 = \{5,8\}$ is a coalition partner with each of $V_{3}=\{1\}$ and $V_{4}=\{2\}$. It follows that the remaining vertices such as $3,4,6,7,9,10$ must create the collection $\pi_{1,2}=\big\{V_{2}, V_{5}, V_{6}, V_{7}, V_{8}\big\}$, which $V_{2}$ is in a coalition with each of singleton sets $V_{5}$, $V_{6}$, $V_{7}$ and $V_{8}$. Without loss of generality, let $V_{2}=\{3,9\}$, $V_{5}=\{6\}$, $V_{6}=\{4\}$, $V_{7}=\{7\}$ and $V_{8}=\{10\}$. It can be seen, $V_{2}$ forms a coalition with $V_{5}$ and does not form a coalition with each of $V_{6}$, $V_{7}$ and $V_{8}$. Then, there is no the collection $\pi_{1,2}$. Implying that,  $\pi_{1,1} \cup \pi_{1,2} \neq \pi_{1}$ and $\pi_{1,1} \cap \pi_{1,2} \neq \emptyset$, and it is a contradiction. Therefore, there is no a partition of order $8$ and so $C(G_{2})\leq 7$. Similarly, an identical argument shows for any two sets of $\pi_{1}$ such as $V_1$ and $V_2$, which $V_1$ and $V_2$ do not form a coalition, and $V_1$ form a coalition with any two singleton sets of $\pi_{1}$ and $V_2$ form a coalition with each of the other four singleton sets of $\pi_{1}$ the same result holds. To complete the proof in the Case $1$, when $V_1$ and $V_2$ are
		not coalition partners, we also may assume that $\pi_{1,1}$= $\big\{V_{1}, V_{3}, V_{4}, V_{5}\big\}\subset\pi_{1}$, which $V_{1}$ is in a coalition with each of singleton sets $V_{3}$, $V_{4}$ and $V_{5}$ and $\pi_{1,2}$= $\big\{V_{2}, V_{6}, V_{7}, V_{8}\big\}\subset\pi_{1}$, 
		which $V_{2}$ is in a coalition with each of singleton sets $V_{6}$, $V_{7}$ and $V_{8}$, and then by  an identical argument the same result holds. Note that for notational convenience, let $|\pi_{1,1}|=|\pi_{1,2}|=4$.
		Moreover, an identical argument shows the same result holds when $V_1$ and $V_2$ do not form a coalition partner and we may assume that $|\pi_{1,1}|=|\pi_{1,2}|=5$, or $|\pi_{1,1}|=5$ and $|\pi_{1,2}|=4$.

		\medskip
		
		\nt {\bf Case 2.} We suppose that there are 7 singleton sets in the partition and exactly one set of $\pi_{1}$ must consists of three vertices. Without loss of generality, let $|V_1|= 3$ and $|V_j|=1$ for $2\leq j \leq 8$. Suppose $S$ is a dominating set in the graph $G_{2}$. Since $|S|\ge 3$, no two singleton sets form a coalition. Now every set $V_j$ for $2\leq j \leq 8$ must be in a coalition with $V_1$. By Theorem \ref{atmost}, $V_1$ is a coalition partner of at most 4 singleton sets. Therefore, remain at least 4 singleton sets, neither of which can form a coalition with $V_1$,
		 and it is a contradiction. It follows that we cannot create a partition of order $8$. Hence, $C(G_{2})\leq 7$. \\ 
		Now we show that there is a partition of order $7$ for the graph $G_{2}$.
		Let $\pi'_{1}= \big\{V'_{1}=\{2,5,8\}, V'_{2}= \{1,3,4,6,7,9,10\}\big\}$ be a domatic partition of a graph $G_{2}$, where $d(G_{2})=2$. Since any partition of a non-singleton, minimal dominating set into two nonempty sets creates two non-dominating sets whose union forms a coalition, so  we can partition the minimal dominating set $V'_{1}=\{2,5,8\}$ into two sets $V'_{1,1}=\{2\}$ and $V'_{1,2}=\{5,8\}$, which form a coalition. Now we create a partition $\pi''_{1}$ of sets and put the sets $V'_{1,1}=\{2\}$ and $V'_{1,2}=\{5,8\}$ in this partition. To obtain other sets of partition $\pi''_{1}$, let $V''_{2}=\{3,6,10\}\subset V'_{2}$ be a minimal dominating set contained in $V'_{2}$. Then we can partition it into two non-dominating sets $V''_{2,1}=\{6\}$ and $V''_{2,2}=\{3,10\}$, add these two sets to $\pi''_{1}$. The set $U=\{1,4,7,9\}$ remains which is a dominating set. Hence, let $U_{1}$=\{1,4,9\}$ \subset U $ be a minimal dominating set contained in $U$. Then we can partition it into two non-dominating sets $U_{1,1}=\{1\}$ and $U_{1,2}=\{4,9\}$, add these two sets to $\pi''_{1}$. Now remains the singleton set $U'=\{7\}$, which is not a dominating set. The set $U'$ forms a coalition with $V''_{2,2}=\{3,10\}$, then we can add $U'$ to $\pi''_{1}$. Thereby creating a $c$-partition of $G_{2}$ of order $7$. 
		And we have a $c$-partition of $G_{2}$ of order $8$ as follows.\\
		$\pi''_{1} = \big\{V'_{1,1}=\{2\},V'_{1,2}=\{5,8\},V''_{2,1}=\{6\}, V''_{2,2}=\{3,10\},U_{1,1}=\{1\},U_{1,2}=\{4,9\}, U'=\{7\}\big\}.$

		Note that using the same approach we can obtain the coalition number for other graphs in this part.
		
		\item[(iii)] Let us obtain the coalition number of  the graph $G_{17}$. From our previous discussions, similarly by the same argument, we can show there are no partitions of order $8$, $9$ and $10$ for the graph $G_{17}$. Therefore, now we show a partition of order $7$ does not exist for the graph $G_{17}$. We can suppose $\pi_{2} = \big\{V_1, V_2,\ldots, V_7\big\}$ is a partition with seven sets for $G_{17}$. We deduce the following cases. 
		
		\medskip 
		
		\nt {\bf Case 1.} We may assume that there are exactly three sets of $\pi_{2}$ must contain six vertices. Without loss of generality, let $|V_1|=|V_2|= |V_3|= 2$ and $|V_j|=1$ for $4\leq j \leq 7$. From our previous discussions, no two singleton sets can form a coalition. Now every set $V_j$ for $4\leq j \leq 7$ must be in a coalition with at least one of $V_1$, $V_2$ , or $V_3$. Let $V_1$ forms a coalition with each of $V_2$ and $V_3$, and $V_2$ forms a coalition with $V_3$. Then, by Theorem \ref{atmost}, each of $V_1$, $V_2$ and $V_3$ can be in at most two additional coalitions. Now we can assume that one of $V_1$, $V_2$, or $V_3$ is in a coalition with exactly two sets of $V_j$ for $4\leq j \leq 7$ and each of the two other sets can be in a coalition with one set of $V_j$ for $4\leq j \leq 7$. Without loss of generality, suppose there are three collections such as $\pi_{2,1}=\big\{V_{1}, V_{4}, V_{5}\big\}\subset\pi_{2}$, which $V_{1}$ is in a coalition with each of singleton sets $V_{4}$ and $V_{5}$, and $\pi_{2,2}= \big\{V_{2}, V_{6}\big\}\subset\pi_{2}$, 
		which $V_{2}$ is in a coalition with singleton set $V_{6}$, and $\pi_{2,3}= \big\{V_{3}, V_{7}\big\}\subset\pi_{2}$, 
		which $V_{3}$ is in a coalition with singleton set $V_{7}$ such that 
		 $\displaystyle \cup_{j=1}^{3}\pi_{2,j}= \pi_{2}$ and $\pi_{2,1}\cap \pi_{2,2}=\emptyset$, $\pi_{2,1}\cap \pi_{2,3}=\emptyset$ and 
		 $\pi_{2,2}\cap \pi_{2,3}=\emptyset$. Without loss of generality, suppose $\pi_{2,1}=\big\{V_{1}=\{1,3\}, V_{4}=\{2\}, V_{5}=\{5\}\big\}$, $\pi_{2,2}=\big\{V_{2}=\{4,10\}, V_{6}=\{8\}\big\}$ and $\pi_{2,3}=\big\{V_{3}=\{6,7\}, V_{7}=\{9\}\big\}$. It can be seen that, $V_1$ forms a coalition with each of $V_2$ and $V_3$, and $V_2$ forms a coalition with $V_3$, however, $V_{1}$ does not form a coalition with each of singleton sets $V_{4}$ and $V_{5}$, and $V_{2}$ does not form a coalition with $V_{6}$, and also $V_{2}$ is not in a coalition with singleton set $V_{7}$. Therefore, there are no three collections such as $\pi_{2,1}$, $\pi_{2,2}$ and $\pi_{2,3}$ and it is a contradiction. Implying that, there is no  a partition of order $7$ and then $C(G_{17})\leq 6$. Similarly, we can show when $V_1$, $V_2$ and $V_3$ be any three sets of vertices such that $|V_1|=|V_2|= |V_3|= 2$, and $V_1$ forms a coalition with each of $V_2$ and $V_3$, and $V_2$ forms a coalition with $V_3$, the same result holds. Note that if $V_1$ and $V_2$ do not form a coalition partner and we may assume 
		$|\pi_{2,1}|=|\pi_{2,2}|= |\pi_{2,3}|=3$, or $|\pi_{2,1}|=|\pi_{2,2}|=3$ and $|\pi_{2,3}|=2$, and then the same result holds.
		
		With an identical argument we have the same result, when at least one pair of sets $V_1$, $V_2$ or $V_3$ does not form a coalition. Therefore, $C(G_{17})\leq 6$.

		\medskip 
		\nt {\bf Case 2.} We may assume that there are $5$ singleton sets and two sets of $\pi_{2}$ must contain three and two vertices. Without loss of generality, let $|V_1|=3$, $|V_2|=2$ and $|V_j|=5$ for $3\leq j \leq 7$. Since $G_{2}$ has no dominating set with less than three vertices, then no two singleton sets are in a coalition. Now every set $V_j$ for $3 \leq j\leq7$ must be in a coalition with at least one of $V_1$ or $V_2$. Theorem \ref{atmost} implies that,
		each of $V_1$ and $V_2$ is in a coalition with at most $\Delta(G)+1=4$ sets. Let $V_1$ and $V_2$ form a coalition, then each of $V_1$ and $V_2$ can be in at most three additional coalitions. Without loss of generality, suppose there are two collections such as $\pi'_{2,1}= \big\{V_{1}, V_{3}, V_{4}, V_{5}\big\}\subset\pi_{2}$, which $V_{1}$ is in a coalition with each of singleton sets $V_{3}$,$V_{4}$ and $V_{5}$, and $\pi'_{2,2}=\big\{V_{2}, V_{6},V_{7}\big\}\subset\pi_{2}$, which $V_{2}$ is in a coalition with each of singleton sets $V_{6}$ and $V_{7}$ such that $\pi'_{2,1} \cup \pi'_{2,2} = \pi_{2}$ and $\pi'_{2,1}\cap \pi'_{2,2} =\emptyset$. Without loss of generality, let $\pi'_{2,1}=\big\{V_{1}=\{1,3,4\}, V_{3}=\{7\}, V_{4}=\{10\}, V_{5}=\{8\}\big\}$, and $\pi'_{2,2}=\big\{V_{2}=\{2,5\}, V_{6}=\{6\}, V_{7}=\{9\}\big\}$. It can be seen, $V_1$ and $V_2$ form a coalition, however, $V_{1}$ and $V_{2}$ do not form a coalition with $V_{5}$ and $V_{7}$, respectively. Then, there is at least one singleton set in $\pi'_{2,1}$, which does not form coalition with $V_1$, and also there is at least one singleton set in $\pi'_{2,2}$, which does not form a coalition with $V_2$. Therefore, there are no two collections such as $\pi'_{2,1}$, $\pi'_{2,2}$ and it is a contradiction. Implying that, there is no a partition of order $7$ and then $C(G_{17})\leq 6$. Similarly, we can show when $V_1$ and $V_2$ be any two sets of vertices such that $|V_1|=3$ and $|V_2|=2$, and $V_1$ forms a coalition with $V_2$, the same result holds. Hence, there is no a collection such as $\pi'_{2,1}$ and it is a contradiction. Implying that, there is no a partition of order $7$ and then $C(G_{17})\leq 6$. Note that if $V_1$ and $V_2$ form a coalition partner and we assume that $|\pi'_{2,1}|=3$ and $|\pi'_{2,2}|=4$, or $|\pi'_{2,1}|=|\pi'_{2,2}|=4$, then an identical argument proves the same result holds.
		
		If $V_1$ and $V_2$ do not form a coalition, then each of $V_1$ and $V_2$ is in coalition with at most $\Delta(G)+1=4$ sets. Without loss of generality, assume $\pi'_{2,1}= \big\{V_{1}, V_{3}, V_{4}, V_{5},V_{6}\big\}\subset\pi_{2}$, which $V_{1}$ is in a coalition with each of singleton sets $V_{3}$, $V_{4}$, $V_{5}$ and $V_{6}$, and $\pi'_{2,2}= \big\{V_{2}, V_{7}\big\}\subset\pi_{2}$, which $V_{2}$ is in a coalition with singleton set $V_{7}$ such that $\pi'_{2,1} \cup \pi'_{2,2} = \pi_{2}$ and $\pi'_{2,1}\cap \pi'_{2,2} =\emptyset$. Without loss of generality, let $V_2=\{3,5\}$, $V_7 =\{9\}$. According to the all dominating sets of cardinality $4$, which are listed in \cite{1}, may be seen that every pair of dominating sets of cardinality 4, which have three vertices in common, either whose union of a dominating set of cardinality $3$ and a singleton set, or whose union of a non-dominating set of cardinality $3$ and a singleton set. Now we consider the remaining vertices $1,2,4,6,7,8,10$. Without loss of generality, assume $\pi'_{2,1}= \big\{V_{1}=\{1,2,4\}, V_{3}=\{6\}, V_{4}=\{7\}, V_{5}=\{8\}, V_{6}=\{10\}\big\}$. It can be seen, $V_{1}$ forms a coalition with each of $V_{5}$ and $V_{6}$, and does not form a coalition with each of $V_{3}$ and $V_{4}$. Therefore, there is no  collection such as $\pi'_{2,1}$ and it is a contradiction. Similarly, when $V_{1}$ is any non-dominating set of cardinality $3$, we cannot create 4 coalitions of order $4$, which have three vertices in common by remaining vertices $1,2,4,6,7,8,10$. Hence, we cannot create a partition of order $7$. Implying that, $C(G_{17})\leq 6$. Note that an identical argument shows if $V_2\cup V_7$ is any dominating set of cardinality $3$, then the same result holds. Finally, to complete our argument when $V_1$ and $V_2$ do not form a coalition partner, we assume following subcases, and then the same result holds for the following subcases.
		
		\nt {\bf Subcase 2.1} $|\pi'_{2,1}|=|\pi'_{2,2}|=5$.
		
		\nt {\bf Subcase 2.2}  $|\pi'_{2,1}|= 5$ and $|\pi'_{2,2}|=4$, or $|\pi'_{2,1}|= 4$ and $|\pi'_{2,2}|=5$.
		
		\nt {\bf Subcase 2.3} $|\pi'_{2,1}|= 5$ and  $|\pi'_{2,2}|=3$, or $|\pi'_{2,1}|= 3$ and  $|\pi'_{2,2}|=5$. 
		
		\nt {\bf Subcase 2.4} $|\pi'_{2,1}|= 2$,  $|\pi'_{2,2}|=5$. 
		
		\nt {\bf Subcase 2.5}  $|\pi'_{2,1}|= 4$,  $|\pi'_{2,2}|=4$.
		
		\nt {\bf Subcase 2.6}  $|\pi'_{2,1}|= 4$ and $|\pi'_{2,2}|=3$, or $|\pi'_{2,1}|= 3$ and $|\pi'_{2,2}|=4$.

		\medskip
		\nt {\bf Case 3.} We can suppose there are 6 singleton sets in the partition and exactly one set of $\pi_{2}$ must consists of four vertices. Without loss of generality, let $|V_1|= 4$ and $|V_j|=1$ for $2\leq j \leq 7$. Since $G_{17}$ has no dominating set with less than three vertices, no two singleton sets can be in a coalition. Now every set $V_j$ for $2\leq j \leq 7$ must be in a coalition with $V_1$. By Theorem \ref{atmost}, $V_1$ is in a coalition with at most $\Delta(G)+1=4$ sets. That means $V_1$ is a coalition partner of at most 4 singleton sets. Therefore, remain at least three singleton set, neither of
		which can form a coalition with $V_1$, and it is a contradiction. Then, we cannot create a partition of order $7$. Hence, $C(G_{17})\leq 6$. 
		
		To complete the proof, we create a $c$-partition of order $6$  for the graph $G_{17}$ and we show that this partition is the maximal partition. We first create a domatic partition.
		We can suppose $\pi''_{2}= \big\{V''_{1}=\{1,7,3\}, V''_{2}= \{2,4,5,6,8,9,10\}\big\}$ be a domatic partition of a graph $G_{17}$, where $d(G_{17})=2$. Since any partition of a non-singleton, minimal dominating set into two nonempty sets creates two non-dominating sets whose union forms a coalition, so we can partition the minimal dominating set $V_{1}=\{1,7,3\}$ into two sets $V''_{1,1}=\{1\}$ and $V''_{1,2}=\{7,3\}$, which form a coalition. Now we create a partition $\pi'''_{2}$ of sets and put the sets $V''_{1,1}=\{1\}$ and $V''_{1,2}=\{7,3\}$ in this partition. To obtain other sets of partition $\pi'''_{2}$, let $V'''_{2}=\{2,8,4\}\subset V''_{2}$ be a minimal dominating set contained in $V''_{2}$. Then, we can partition it into two non-dominating sets $V'''_{2,1}=\{2\}$ and $V'''_{2,2}=\{8,4\}$, add these two sets to $\pi'''_{2}$. The set $T=\{5,6,9,10\}$ remains which is not a dominating set, else there are at least 3 disjoint dominating sets in $G$, a contradiction, because  $d(G_{17})=2$. The set $T$ forms a coalition with other sets, so  we can add $T$ to $\pi'''_{2}$. Thereby creating a $c$-partition of $G_{17}$ of order at least $5$, however, we can create a $c$-partition with order $6$ because there are two sets $T_{1}=\{5,6\}\subset T$ and $T_{2}=\{9,10\}\subset T$ form a coalition with $V'''_{2,1}=\{2\}$. Therefore, we can remove the set $T$ and add the sets $T_{1}=\{5,6\}$ and $T_{2}=\{9,10\}$ to $\pi'''_{2}$. And finally we have a $c$-partition of $G $ of order $6$ as follows:\\
		
		$\pi'''_{2} = \big\{V''_{1,1}=\{1\},V''_{1,2}=\{7,3\},V'''_{2,1}=\{2\}, V'''_{2,2}=\{8,4\},T_{1}=\{5,6\},T_{2}=\{9,10\} \big\}.$ 
		
		We observe that  $\pi'''_{2}$ is a $c$-partition with maximum order. As we know, $C(G_{17})\ge 5$. Moreover, we have $ C(G_{17})\leq 6$. Therefore, $C(G_{17})= 6$. Using the argument used to obtain the coalition number of the graph $G_{17}$, we can obtain the coalition number for other graphs belong to this part. \qed
	\end{enumerate} 
	
\end{proof}

\section{Conclusion}

Study of the coalition number of regular graphs is a main subject in the coalition theory. In this paper, we have determined the coalition number of the cubic graphs of order at most $10$. We observed that the coalition number of all these graph  is $6$, $7$, or $8$. We think that this is true for all cubic graphs of order at least 6.  This raises us to propose the following open problem.  

\begin{problem}

Is it true that for any cubic graphs $G$ of order at least  $6$, $C(G)\in\{6,7,8\}?$. 
\end{problem}

Recently, total coalition has introduced and investigated in \cite{TC}. Two disjoint sets $V_1, V_2\subseteq V$ are called a total coalition in $G$, if neither  $V_1$ and $V_2$ is   a total dominating set of $G$ but $V_1\cup V_2$ is a total dominating set.  A total coalition partition of $G$ is a vertex partition $\pi=\{V_1,V_2,\ldots, V_k\}$ such that no set  of $\pi$ is  a total dominating set but  each set $V_i\in \pi$ forms a total coalition with another set $V_j\in \pi$.  The maximum cardinality of a total coalition partition of $G$ is called the total coalition number of $G$, denoted by $TC(G)$.   
It is natural to study the total coalition number of regular graphs, especially, cubic graphs. 
So we close the paper with the following open problems.

\begin{problem}  
What is the total coalition number of the cubic graphs of order more than four? 

\end{problem} 

\begin{problem}
	What are the sharp bounds for the total coalition number of $k$-regular graphs?
	\end{problem}

\medskip

%\nt{\bf Acknowledgement.} 
%The research by Hamid Reza Golmohammadi and Elena V.
%Konstantinova was supported by the Russian Science Foundation under
%grant no. 23-21-00459.

\end{document}